\providecommand{\keywords}[1]{\textbf{\textbf{Keywords}:} #1}
\providecommand{\subjclass}[1]{\textbf{\textbf{Mathematics subject classification(2010)}:} #1}
\newcounter{casenum}
\newcommand\mycom[2]{\genfrac{}{}{0pt}{}{#1}{#2}}
\def\re{{\Re e\,}}
\def\f{{\mathfrak{f}}}
\theoremstyle{plain}
\newtheorem*{thm*}{Theorem}
\newtheorem{thm}{Theorem}
\newtheorem{lem}{Lemma}
\theoremstyle{definition}
\numberwithin{equation}{section}
\title{A note on the extended Bruinier-Kohnen conjecture}
 \author{Mohammed Amin Amri \and M'hammed Ziane }
 \newcommand{\Addresses}{{% additional braces for segregating \footnotesize
  \bigskip
  \footnotesize

  Mohammed Amin~Amri, \textsc{ACSA Laboratory, Department of Mathematics,Faculty of Sciences, Mohammed First University,Oujda, Morocco.}\par\nopagebreak
  \textit{E-mail address}, Mohammed Amin~Amri: \texttt{amri.amine.mohammed@gmail.com}
  
  M'hammed~Ziane, \textsc{ACSA Laboratory, Department of Mathematics,Faculty of Sciences, Mohammed First University,Oujda, Morocco.}\par\nopagebreak
  \textit{E-mail address}, M'hammed~Ziane: \texttt{ziane12001@yahoo.fr}

}}
\begin{document}

\maketitle
%\linenumbers 
\begin{abstract}
Let $f$ be a cusp form of half-integral weight $k+1/2$, whose Fourier coefficients $a(n)$ not necessarily real. We prove an extension of the Bruinier-Kohnen conjecture on the equidistribution of the signs of $a(n)$ for the families $\{a(tp^{2\nu})\}_{p,\text{prime}}$, where $\nu$ and $t$ be fixed odd positive integer and square-free integer respectively.
\end{abstract}
\keywords{Sign changes; Fourier coefficients of cusp forms; Sato-Tate Equidistribution.}\\
\subjclass{11F03; 11F30; 11F37.}

\section{Introduction and statement of result}
Over the recent years, the problem of sign changes of Fourier coefficients of various kind of automorphic forms has been studied by several authors \cite{Meher2018,Pr,Das,Meher2015,Amri}. The case of modular forms of half-integral weight was first considered by Bruinier and Kohnen in \cite{bruinier}, there, they proved sign change results for some subfamilies of Fourier coefficients of half-integral weight modular forms which are accessible via the Shimura-lift. Moreover, they conjectured that the signs of the sequence of Fourier coefficients of a cusp form which lies in the Kohnen's plus space are equidistributed. Through this problem in \cite{IW,IW2,Arias} Arias, Inam and Wiese proved the Bruinier-Kohnen conjecture in the special case when the Fourier coefficients of a Hecke eigenforms of half-integral weight are indexed by $tn^2$ with $t$ a fixed square-free number and $n\in\mathbb{N}$. In \cite{Amri2}, the results of \cite{IW,Arias} were generalized to Hecke eigenforms with not necessarily real Fourier coefficients, based on this and empirical evidence the first author generalized  the Bruinier-Kohnen conjecture to cusp forms in Kohnen's plus space with not necessarily real Fourier coefficients $a(n)$. More precisely, he conjectured that  
$$ 
\lim_{x\to\infty}\dfrac{\#\{n\le x\; :\; \re\{a(n)e^{-i\phi}\}\gtrless 0\}}{\#\{n\le x \; :\; \re\{a(n)e^{-i\phi}\} \neq 0\}}=\frac{1}{2},
$$
for each $\phi$ belonging to $[0,\pi)$. 

In the present note we address the above conjecture. Indeed, we shall prove that it holds for the families $\{a(tp^{2\nu})\}_{p\in\mathbb{P}}$ where $\nu$ and $t$ be a fixed odd positive integer and a square-free integer. In order to state our result we need to introduce some notations. Let $\mathbb{P}$ be the set of all prime numbers, and let $k,N$ be natural numbers, assume from now on that $k\ge2$ and $N$ be an odd and square-free integer. Let $\chi$ be a Dirichlet character modulo $4N$, write $S_{k+1/2}(4N,\chi)$ for the space of cusp forms of weight $k+1/2$ for the congruence subgroup $\Gamma_0(4N)$ with character $\chi$. In this set-up we have.

\begin{thm*}\thlabel{thm:1}
Let $\f\in S_{k+1/2}(4N,\chi)$ be a cuspidal Hecke eigenform, and let the Fourier expansion of $\f$ at $\infty$ be
$$
\f(z)=\sum_{n\ge1}a(n)q^n\quad q:=e^{2\pi i z}.
$$
Let $\nu$ be a positive odd integer, and $t$ be a square-free integer such that $a(t)\ne 0$. Let $\phi\in[0,\pi)$, define the set of primes
$$P_{> 0}(\phi,\nu):=\{p\in\mathbb{P}\; :\; \re(a(tp^{2\nu})e^{-i\phi})> 0\},$$
and similarly $P_{< 0}(\phi,\nu), P_{\neq 0}(\phi,\nu)$. Then the sequence $\{a(tp^{2\nu})\}_{p\in\mathbb{P}}$ is oscillatory (in the sense of \cite{KKP}), moreover the sets $P_{< 0}(\phi,\nu)$ and $P_{> 0}(\phi,\nu)$ have equal positive natural density, that is, both are precisely half of the natural density of the set $P_{\ne 0}(\phi,\nu)$.
\end{thm*}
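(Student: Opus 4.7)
The natural strategy is to reduce the problem via the Shimura correspondence to an equidistribution question for the Sato--Tate angles of the integral-weight Hecke eigenform $F_t := \text{Sh}_t(\f) \in S_{2k}(\Gamma_0(2N), \chi^2)$. Writing $\psi := \chi \cdot \chi_t$ with $\chi_t(n) := \left(\tfrac{(-1)^k t}{n}\right)$, Shimura's identity of Dirichlet series gives, for each prime $p \nmid 4N$,
\begin{equation*}
\frac{a(tp^{2\nu})}{a(t)} \;=\; \sum_{j=0}^{\nu}\psi(p)^j\, p^{j(k-1)}\, \lambda_{F_t}(p^{\nu-j}).
\end{equation*}
Parametrising the Satake roots as $\alpha_p = p^{k-1/2} e^{i(\arg\chi^2(p)/2 + \theta_p)}$ and $\beta_p = p^{k-1/2} e^{i(\arg\chi^2(p)/2 - \theta_p)}$ with $\theta_p \in [0,\pi]$ yields $\lambda_{F_t}(p^m) = p^{m(k-1/2)}\,\chi(p)^m\, U_m(\cos\theta_p)$ (in a fixed branch), where $U_m$ is the Chebyshev polynomial of the second kind. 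Substituting and factoring out the top powers gives
\begin{equation*}
a(tp^{2\nu}) \;=\; a(t)\, \chi(p)^\nu\, p^{\nu(k-1/2)}\, R_p, \qquad R_p \;:=\; \sum_{j=0}^{\nu}\chi_t(p)^j\, p^{-j/2}\, U_{\nu-j}(\cos\theta_p)\in\mathbb{R},
\end{equation*}
with $R_p = U_\nu(\cos\theta_p) + O_\nu(p^{-1/2})$ uniformly in $p$.

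Setting $\gamma_p := \arg a(t) + \nu\arg\chi(p) - \phi$, this decomposition gives $\re(a(tp^{2\nu})e^{-i\phi}) = |a(t)|\,p^{\nu(k-1/2)}\,R_p\,\cos\gamma_p$, so the sign in question factors as $\mathrm{sign}(R_p)\cdot\mathrm{sign}(\cos\gamma_p)$: the first factor depends only on the Sato--Tate angle $\theta_p$, and the second only on the residue class of $p$ modulo the conductor of $\chi$. The parity input is that for $\nu$ odd, $U_\nu(\cos(\pi-\theta)) = -U_\nu(\cos\theta)$; combined with the invariance of the measure $\mu_{\mathrm{ST}} := \tfrac{2}{\pi}\sin^2\theta\, d\theta$ under $\theta\mapsto\pi-\theta$, this forces $\mu_{\mathrm{ST}}(\{U_\nu(\cos\theta) > 0\}) = \mu_{\mathrm{ST}}(\{U_\nu(\cos\theta) < 0\}) = 1/2$. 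Invoking Sato--Tate equidistribution for $F_t$ (Barnet-Lamb--Geraghty--Harris--Taylor) jointly with Chebotarev for the residues of $p$ modulo the conductor of $\chi$, and the expected asymptotic independence of the two factors, a direct count identifies the natural densities of $P_{>0}(\phi,\nu)$ and $P_{<0}(\phi,\nu)$ as exactly half of that of $P_{\neq 0}(\phi,\nu)$.

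The oscillatory property in the sense of \cite{KKP} I would deduce from a prime-number-theorem bound of the form $\sum_{p\le x} a(tp^{2\nu}) = o\bigl(x^{\nu(k-1/2)+1}/\log x\bigr)$, which follows from $\int_0^\pi U_\nu(\cos\theta)\, d\mu_{\mathrm{ST}} = 0$ for $\nu \ge 1$ together with non-vanishing of the relevant twisted symmetric-power $L$-functions of $F_t$ on the line $\re(s) = 1$. The principal obstacle is upgrading the pointwise approximation $R_p = U_\nu(\cos\theta_p) + O(p^{-1/2})$ to an asymptotic sign statement: one must verify that the set of primes for which $|U_\nu(\cos\theta_p)| \ll p^{-1/2}$ has density zero, which typically requires either a quantitative (effective) form of Sato--Tate or a careful Koksma--Erd\H{o}s style truncation argument. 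A secondary technical point is positivity of the density of $P_{\neq 0}(\phi,\nu)$; this can fail only when $\cos\gamma_p$ vanishes identically in $p$, a degenerate scenario arising only for isolated $\phi$ in the case where $\chi$ is trivial, and which lies outside the non-degenerate regime of the statement.
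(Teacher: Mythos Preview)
Your overall strategy coincides with the paper's: pass to the Shimura lift, write $a(tp^{2\nu})=a(t)\,\chi(p)^\nu p^{\nu(k-1/2)}R_p$ with $R_p$ real, partition primes by the value $\zeta=\chi(p)$, and apply Sato--Tate on each fibre. Two points deserve correction, however.

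First, your displayed identity for $a(tp^{2\nu})/a(t)$ is in the wrong direction. Shimura's Dirichlet-series relation expresses $A_t(p^\nu)$ as a sum over $j\le\nu$ of $a(tp^{2(\nu-j)})$; inverting via M\"obius gives only \emph{two} terms,
\[
\frac{a(tp^{2\nu})}{p^{\nu(k-1/2)}\zeta^\nu}=\frac{\lambda(p^\nu)}{\zeta^\nu}-\frac{\chi_0(p)}{\sqrt p}\,\frac{\lambda(p^{\nu-1})}{\zeta^{\nu-1}}
= U_\nu(\cos\theta_p)-\frac{\chi_0(p)}{\sqrt p}\,U_{\nu-1}(\cos\theta_p),
\]
not a full sum $\sum_{j=0}^\nu$. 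This is harmless for the rest of your argument since you only use $R_p=U_\nu(\cos\theta_p)+O(p^{-1/2})$, but it is worth getting right.

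Second, and more importantly, what you call the ``principal obstacle'' is not one. No effective Sato--Tate or Koksma--Erd\H os machinery is needed to pass from $R_p=U_\nu(\cos\theta_p)+O(p^{-1/2})$ to a density statement. The paper uses a direct $\epsilon$-truncation: for every $\epsilon>0$ and every prime $p>\epsilon^{-2}$ with $\chi(p)=\zeta$, the condition $\sin((\nu+1)\theta_p)>\epsilon$ forces $R_p>0$. Hence
\[
\{p\le x:\chi(p)=\zeta,\ \theta_p\in I_\epsilon\}\subset\{p\le\epsilon^{-2}\}\cup\{p\le x:\chi(p)=\zeta,\ R_p>0\},
\]
where $I_\epsilon=\{\theta:\sin((\nu+1)\theta)>\epsilon\}$. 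Qualitative Sato--Tate on the fibre $\chi(p)=\zeta$ gives $\liminf_{x\to\infty}\pi_{>0}(x,\zeta)/\pi_\zeta(x)\ge\mu_{\mathrm{ST}}(I_\epsilon)$, and letting $\epsilon\to0$ yields $\ge\tfrac12$; the matching $\limsup\le\tfrac12$ comes from the complementary set. Your symmetry argument $U_\nu(\cos(\pi-\theta))=-U_\nu(\cos\theta)$ is exactly how one sees $\mu_{\mathrm{ST}}(I_0)=\tfrac12$, and the paper cites \cite{Meher2017} for this.

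Finally, your route to the oscillatory property via a prime-sum estimate is unnecessary. Once both $P_{>0}(\phi,\nu)$ and $P_{<0}(\phi,\nu)$ contain, for each relevant $\zeta$, an infinite subset (immediate from the inclusion above since $\mu_{\mathrm{ST}}(I_\epsilon)>0$), the sequence is oscillatory in the sense of \cite{KKP} by definition.
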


The above theorem improves the result of the first author in \cite[Theorem 5, non-CM-case]{Amri2}. The proof which will be given in the next section follows broadly the same lines of the proof of \cite[Theorem 5, non-CM-case]{Amri2} and \cite[Theorem 1.4]{Amri3}. The essential ingredients are the Shimura-lift \cite{Shi}, and the Sato-Tate conjecture \cite[Theorem B]{ST}.

\section{Proof of the result}
We start by recalling some basic properties of the Shimura-lift and introduce some notations. The Shimura correspondence \cite{Shi,Niw} maps $\f$ to a Hecke eigenform $f_t$ of weight $2k$ for the group $\Gamma_0(2N)$ with character $\chi^2$. Let the Fourier expansion of $f_t$ be given by 
$$
f_t(z)=\sum_{n\ge 1} A_t(n)q^n.
$$
According to \cite{Shi}, the $n$-th Fourier coefficient of $f_t$ is given by 
\begin{equation}
A_t(n)=\sum_{d|n}\chi_{t,N}(d)d^{k-1}a\left(\frac{n^2}{d^2}t\right),\label{eq1}
\end{equation}
where $\chi_{t,N}$ denotes the character $\chi_{t,N}(d):=\chi(d)\left(\frac{(-1)^{k}N^{2}t}{d}\right)$, we let $\chi_0(d):=\left(\frac{(-1)^{k}N^{2}t}{d}\right)$. Since $\f$ is a Hecke eigenform, then so is the Shimura lift. Indeed, we have $f_t=a(t)f$ where $f$ is a normalized Hecke eigenform, write 
$$
f(z)=\sum_{n\ge 1}\lambda(n)n^{k-1/2}q^n,
$$
for its Fourier expansion at $\infty$. We shall assume that $a(t)=1$, in the general case we may apply the proof to $\frac{\f}{a(t)}$. Since $2N$ is square-free, it follows that $f$ is a Hecke eigenform without complex multiplication.

Let $\zeta$ be a root of unity belonging to $\mathrm{Im}(\chi)$, and let $p$ be a prime number such that $\chi(p)=\zeta$, it is clear that $\frac{\lambda(p)}{\zeta}$ is real. By Deligne's bound we have $\left |\frac{\lambda(p)}{\zeta}\right|\le 2$, thus we may write
\begin{equation}\label{eq:1}
\lambda(p)=2\zeta  \cos(\theta_p),
\end{equation}
for a uniquely defined angel $\theta_p\in [0,\pi]$. At this point we state the following theorem which will be crucial for our purpose. 
\begin{thm}(Barnet-Lamb, Geraghty, Harris, Taylor)\label{thmST}
Assume the set-up above. The sequence $\{\theta_p\}_p$ is equidistributed in $[0,\pi]$ as $p$ varies over primes satisfying $\chi(p)=\zeta,$ with respect to the Sato-Tate measure $\mu_{\text{ST}}:=\frac{2}{\pi}\sin^2\theta d\theta$. In particular, for any sub-interval $I\subset [0,\pi]$ we have 
$$\lim_{x\to\infty}\dfrac{\#\{p\le x\;:\; \chi(p)=\zeta,\;\theta_p\in I\}}{\#\{p\le x\;:\; \chi(p)=\zeta\}}=\frac{2}{\pi}\int_{I}\sin^2\theta d\theta.$$
\end{thm}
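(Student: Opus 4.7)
The plan is to derive the equidistribution from the automorphy of the symmetric power $L$-functions attached to $f$---which is the substance of the Barnet-Lamb--Geraghty--Harris--Taylor theorem itself---combined with Weyl's criterion and character orthogonality. The excerpt already identifies $f$ as a non-CM holomorphic cuspidal Hecke eigenform of weight $2k$, level $2N$ (odd and square-free), and character $\chi^2$, which is precisely the required input.

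First, by Weyl's criterion applied to the Chebyshev polynomials of the second kind $U_m(\cos\theta)=\sin((m+1)\theta)/\sin\theta$, $m\ge0$, which form a complete orthonormal system in $L^2([0,\pi],\mu_{\text{ST}})$, equidistribution of $\{\theta_p:\chi(p)=\zeta\}$ with respect to $\mu_{\text{ST}}$ is equivalent to
$$
S_m(x):=\sum_{\substack{p\le x\\ \chi(p)=\zeta}}U_m(\cos\theta_p)=o(\pi_\zeta(x))\qquad (m\ge 1),
$$
where $\pi_\zeta(x):=\#\{p\le x:\chi(p)=\zeta\}\sim\pi(x)/\#G$ by Dirichlet's theorem, $G$ denoting the cyclic image of $\chi$ in $\mathbb{C}^{\times}$.

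Next, I would detach the constraint $\chi(p)=\zeta$ by orthogonality on $G$. Since every character of $G$ is of the form $\chi^{a}$ for some $a\in\mathbb{Z}/(\#G)\mathbb{Z}$, one has
$$
\mathbf{1}_{\chi(p)=\zeta}=\frac{1}{\#G}\sum_{a}\zeta^{-a}\chi(p)^{a},
$$
so matters reduce to proving, for each $m\ge1$ and each $a$,
$$
T_{a,m}(x):=\sum_{p\le x}\chi(p)^{a}U_m(\cos\theta_p)=o(\pi(x)).
$$
The bridge to $L$-functions is the identity that for $p\nmid 4N$ the Satake parameters of $f$ are $\alpha_p=\chi(p)e^{i\theta_p}$ and $\beta_p=\chi(p)e^{-i\theta_p}$ (since $\alpha_p+\beta_p=\lambda(p)=2\chi(p)\cos\theta_p$ and $\alpha_p\beta_p=\chi(p)^{2}$), whence the $p$-th Hecke eigenvalue of $\mathrm{Sym}^{m}f$ is
$$
\lambda_{\mathrm{Sym}^{m}f}(p)=\sum_{j=0}^{m}\alpha_p^{\,m-j}\beta_p^{\,j}=\chi(p)^{m}U_m(\cos\theta_p).
$$
Thus $T_{a,m}(x)=\sum_{p\le x}\chi(p)^{a-m}\lambda_{\mathrm{Sym}^{m}f}(p)$ is a prime-sum of Dirichlet coefficients of the twisted $L$-function $L(s,\mathrm{Sym}^{m}f\otimes\chi^{a-m})$.

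Finally, the Barnet-Lamb--Geraghty--Harris--Taylor theorem asserts that for every $m\ge1$, $\mathrm{Sym}^{m}f$ is cuspidal automorphic on $\mathrm{GL}_{m+1}$. Since it has dimension $m+1\ge2$, its twist by the $1$-dimensional character $\chi^{a-m}$ cannot contain the trivial representation, so $L(s,\mathrm{Sym}^{m}f\otimes\chi^{a-m})$ is entire and nonvanishing on the line $\Re s=1$. A standard Wiener--Ikehara Tauberian argument then gives $T_{a,m}(x)=o(\pi(x))$, which together with the two preceding paragraphs yields $S_m(x)=o(\pi_\zeta(x))$ for every $m\ge1$ and hence the stated equidistribution. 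The substantive analytic obstacle is entirely packaged inside the cited automorphy theorem; what remains, as sketched, is routine Weyl/orthogonality bookkeeping.
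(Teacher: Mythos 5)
The paper does not prove this statement at all: Theorem \ref{thmST} is imported verbatim as \cite[Theorem B]{ST}, so there is no internal proof to compare yours against. What you have written is a reconstruction of the standard Serre-type deduction that underlies the cited result, and as a sketch it is essentially sound: the Weyl criterion with the Chebyshev basis $U_m$ of $L^2([0,\pi],\mu_{\mathrm{ST}})$, orthogonality on the cyclic group $\mathrm{Im}(\chi)$ to detect the condition $\chi(p)=\zeta$, the identification $\lambda_{\mathrm{Sym}^m f}(p)=\chi(p)^m U_m(\cos\theta_p)$ coming from the Satake parameters $\chi(p)e^{\pm i\theta_p}$ (which is correct here because the nebentypus of $f$ is $\chi^2$, making $\lambda(p)/\chi(p)$ real), and finally holomorphy and nonvanishing of $L(s,\mathrm{Sym}^m f\otimes\chi^{a-m})$ on $\Re s=1$ feeding a Tauberian argument. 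This is precisely the route by which the equidistribution with the extra congruence-type condition $\chi(p)=\zeta$ is obtained in the literature.

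One inaccuracy is worth flagging. Barnet-Lamb--Geraghty--Harris--Taylor do not prove that $\mathrm{Sym}^m f$ is cuspidal automorphic on $\mathrm{GL}_{m+1}$; they prove \emph{potential} automorphy (automorphy after restriction to a suitable totally real extension). Genuine automorphy of all symmetric powers is a much later theorem of Newton and Thorne. Your final paragraph therefore cites a stronger input than the named theorem supplies. The gap is repairable: potential automorphy together with Brauer induction yields meromorphic continuation of $L(s,\mathrm{Sym}^m f\otimes\psi)$ as a ratio of products of (shifted) cuspidal $L$-functions over number fields, and from this one still extracts holomorphy and nonvanishing on $\Re s=1$ for $m\ge1$, which is all your Tauberian step needs. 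So the skeleton of your argument survives, but the justification of the key analytic input should either invoke the potential-automorphy-plus-Brauer mechanism (as in the original paper of Barnet-Lamb et al.) or cite Newton--Thorne if you insist on literal cuspidal automorphy.
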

We shall need the following technical lemmas. 
\begin{lem}\thlabel{lem1}
Let $\zeta$ be a root of unity belonging to $\mathrm{Im}(\chi)$, then the set 
$$
\{p\in\mathbb{P} : \chi(p)=\zeta\},
$$
have natural density equal to $\frac{1}{r_\chi}$, where $r_\chi$ denotes the order of the character $\chi$.
\end{lem}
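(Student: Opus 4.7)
The plan is to translate the condition $\chi(p)=\zeta$ into a union of congruence conditions modulo $4N$ and then invoke Dirichlet's theorem on primes in arithmetic progressions. First I would view $\chi$ as a group homomorphism $(\mathbb{Z}/4N\mathbb{Z})^*\to\mathbb{C}^*$ whose image is, by definition of $r_\chi$, the cyclic group of $r_\chi$-th roots of unity. In particular $\ker(\chi)$ has index $r_\chi$, so for any $\zeta\in\mathrm{Im}(\chi)$ the fiber $\chi^{-1}(\zeta)$ is a coset of $\ker(\chi)$ and therefore consists of exactly $\varphi(4N)/r_\chi$ residue classes modulo $4N$.

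Next, since $\chi$ is a Dirichlet character of modulus $4N$, the value $\chi(p)$ depends only on $p\bmod 4N$, and any prime $p$ with $\chi(p)\ne 0$ is automatically coprime to $4N$. Hence, up to the finitely many primes dividing $4N$ (which do not affect natural density), the set $\{p\in\mathbb{P}:\chi(p)=\zeta\}$ is the disjoint union, over the $\varphi(4N)/r_\chi$ residues $a\in\chi^{-1}(\zeta)$, of the sets $\{p\in\mathbb{P}:p\equiv a\pmod{4N}\}$.

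Finally I would invoke Dirichlet's theorem on primes in arithmetic progressions: for each $a$ coprime to $4N$, the primes $p\equiv a\pmod{4N}$ have natural density $1/\varphi(4N)$ in $\mathbb{P}$. Summing over the $\varphi(4N)/r_\chi$ contributing residue classes yields the total density $1/r_\chi$, which is the claim. There is no serious obstacle here: the argument is a coset count stacked on top of Dirichlet's theorem, and the only subtle point worth flagging is that every $\zeta\in\mathrm{Im}(\chi)$ is attained on equally many residue classes, which follows from the fact that the nonempty fibers of a group homomorphism are cosets of its kernel and are thus equinumerous.
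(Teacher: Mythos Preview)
Your argument is correct and matches the paper's proof essentially line for line: decompose $\{p:\chi(p)=\zeta\}$ as the disjoint union over residues $a\in(\mathbb{Z}/4N\mathbb{Z})^*$ with $\chi(a)=\zeta$, count these as $\#\ker(\chi)=\varphi(4N)/r_\chi$, and apply Dirichlet's theorem to each progression. The only cosmetic addition you make is the explicit remark that the fibers of a group homomorphism are cosets of the kernel, which the paper leaves implicit.
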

\begin{proof}
To establish the above lemma, it suffices to see
$$\{p\in\mathbb{P}:\chi(p)=\zeta\}=\coprod_{\mycom{a\in(\mathbb{Z}/4N\mathbb{Z})^*}{\chi(a)=\zeta}}\{p\in\mathbb{P}\;:\; p\equiv a\pmod{4N}\}.$$
Note that the number of the sets in the union is $\#\mathrm{ker}(\chi)=\frac{\phi(4N)}{r_{\chi}},$ hence by Dirichlet's theorem on arithmetic progressions we have
$$\lim_{x\to \infty} \dfrac{\#\{p\le x : \chi(p)=\zeta\}}{\pi(x)}=\frac{\#\mathrm{ker}(\chi)}{\phi(4N)}=\frac{1}{r_{\chi}},$$
where here and subsequently $\pi(x)$ denotes the prime counting function.
\end{proof}
\begin{lem}\thlabel{lem:1}
Assume the set-up above. Let $\nu$ be an odd positive integer. Define the set of primes $$\mathbb{P}_{>0}(\zeta,\nu):=\left\{p\in\mathbb{P}\;:\; \chi(p)=\zeta,\;\frac{a(tp^{2\nu})}{\zeta^\nu}>0\right\},$$ and similarly $\mathbb{P}_{\ge 0}(\zeta,\nu)$, $\mathbb{P}_{<0}(\zeta,\nu)$ and $\mathbb{P}_{\le 0}(\zeta,\nu)$. Then the sets 
$$
\mathbb{P}_{>0}(\zeta,\nu), \mathbb{P}_{\ge 0}(\zeta,\nu), \mathbb{P}_{<0}(\zeta,\nu),\mathbb{P}_{\le 0}(\zeta,\nu)
$$ 
have a natural density equal to $\frac{1}{2r_{\chi}}$, where $r_\chi$ denotes the order of the character $\chi$.
\end{lem}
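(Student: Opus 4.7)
The plan is to convert the Shimura relation \eqref{eq1} into an explicit closed formula for $a(tp^{2\nu})/\zeta^\nu$ in terms of the Sato--Tate angle $\theta_p$ (plus a small $p$-dependent correction), and then to invoke Theorem~\ref{thmST} together with a symmetry of $\mu_{\text{ST}}$ and a truncation argument. For any prime $p\nmid 4Nt$ (this excludes only finitely many primes, irrelevant for density), the relation \eqref{eq1} can be inverted along the $p$-column as
\[
a(tp^{2\nu})=A_t(p^\nu)-\chi_{t,N}(p)\,p^{k-1}A_t(p^{\nu-1}).
\]
Combining $A_t(p^\nu)=\lambda(p^\nu)\,p^{\nu(k-1/2)}$ with the standard Hecke--Chebyshev identity $\lambda(p^\nu)=\zeta^\nu U_\nu(\cos\theta_p)$ (which follows from \eqref{eq:1} and $\lambda(p^{\nu+1})=\lambda(p)\lambda(p^\nu)-\chi^2(p)\lambda(p^{\nu-1})$), and using $\chi_{t,N}(p)=\zeta\chi_0(p)$, a short computation yields
\[
\frac{a(tp^{2\nu})}{\zeta^\nu}\;=\;p^{\nu(k-1/2)}\!\left(U_\nu(\cos\theta_p)-\frac{\chi_0(p)}{\sqrt{p}}\,U_{\nu-1}(\cos\theta_p)\right).
\]
In particular this quantity is a real number whose sign is well defined, so the sets in the lemma make sense.

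Next I would exploit the involution $\theta\mapsto \pi-\theta$, which preserves $\mu_{\text{ST}}=\tfrac{2}{\pi}\sin^2\theta\,d\theta$ and satisfies $U_\nu(\cos(\pi-\theta))=(-1)^\nu U_\nu(\cos\theta)$; since $\nu$ is odd and the zero set of $U_\nu$ is finite, this forces
\[
\mu_{\text{ST}}\bigl(U_\nu(\cos\theta)>0\bigr)=\mu_{\text{ST}}\bigl(U_\nu(\cos\theta)<0\bigr)=\tfrac12.
\]
To pass from the sign of $U_\nu(\cos\theta_p)$ to the sign of the bracketed expression above I would use the uniform bound $|U_{\nu-1}(\cos\theta)|\le\nu$: for every $\varepsilon>0$ and every prime $p>(\nu/\varepsilon)^2$ with $|U_\nu(\cos\theta_p)|>\varepsilon$, the bracket has the same sign as $U_\nu(\cos\theta_p)$. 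Since the open set $J_\varepsilon^{+}:=\{\theta\in[0,\pi] : U_\nu(\cos\theta)>\varepsilon\}$ is a finite union of sub-intervals with $\mu_{\text{ST}}(J_\varepsilon^{+})\nearrow 1/2$ as $\varepsilon\to 0$, Theorem~\ref{thmST} applied to each of those sub-intervals (combined with \thref{lem1}) gives
\[
\liminf_{x\to\infty}\frac{\#(\mathbb{P}_{>0}(\zeta,\nu)\cap[1,x])}{\pi(x)}\;\ge\;\frac{1}{r_\chi}\,\mu_{\text{ST}}(J_\varepsilon^{+}),
\]
and letting $\varepsilon\to 0$ yields the lower bound $1/(2r_\chi)$; the symmetric argument with $J_\varepsilon^{-}:=\{\theta : U_\nu(\cos\theta)<-\varepsilon\}$ gives the same lower bound for $\mathbb{P}_{<0}(\zeta,\nu)$.

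Finally, since $\mathbb{P}_{>0}(\zeta,\nu)$ and $\mathbb{P}_{<0}(\zeta,\nu)$ are disjoint subsets of $\{p:\chi(p)=\zeta\}$, whose density equals $1/r_\chi$ by \thref{lem1}, both of the liminf bounds must be attained as genuine limits equal to $1/(2r_\chi)$. As a by-product the set $\{p:\chi(p)=\zeta,\ a(tp^{2\nu})=0\}$ has density zero, which upgrades the equalities to $\mathbb{P}_{\ge 0}(\zeta,\nu)$ and $\mathbb{P}_{\le 0}(\zeta,\nu)$ as well. The only delicate step is the perturbation argument in Step~3: because the condition $a(tp^{2\nu})/\zeta^\nu>0$ is not a pure condition on $\theta_p$ but also involves the term $\chi_0(p)/\sqrt{p}$, Theorem~\ref{thmST} cannot be applied to a fixed sub-interval. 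The boundedness $|U_{\nu-1}|\le\nu$ and the $\varepsilon$-truncation are exactly what make this $p$-dependent threshold harmless, at the cost of discarding a set of angles of arbitrarily small $\mu_{\text{ST}}$-measure.
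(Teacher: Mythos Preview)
Your proof is correct and follows essentially the same route as the paper: invert the Shimura relation, express $a(tp^{2\nu})/\zeta^\nu$ as $p^{\nu(k-1/2)}$ times $U_\nu(\cos\theta_p)$ plus an $O(p^{-1/2})$ correction, apply an $\varepsilon$-truncation so that the sign condition becomes a pure condition $\theta_p\in J_\varepsilon^{\pm}$, invoke Sato--Tate on those sub-intervals, and conclude by a sandwich/complement argument together with \thref{lem1}. The only difference is cosmetic: the paper multiplies through by $\sin\theta_p>0$ to work with $\sin((\nu+1)\theta_p)$ and cites \cite{Meher2017} for the value $\mu_{\text{ST}}(I)=\tfrac12$, whereas your $\theta\mapsto\pi-\theta$ symmetry (using that $\nu$ is odd) yields this in a self-contained way.
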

\begin{proof}
Denote by $\pi_{>0}(x,\zeta):=\#\{p\le x : p\in\mathbb{P}_{>0}(\zeta,\nu)\}$ and similarly $\pi_{<0}(x,\zeta)$, $\pi_{\le0}(x,\zeta)$, $\pi_{> 0}(x,\zeta)$, $\pi_{\ge 0}(x,\zeta)$. Applying the M\"obius inversion formula to \eqref{eq1}, we derive that
$$
a(tn^2)=\sum_{d |n} \mu(d)\chi_{t,N}(d)d^{k-1} A_t\left(\frac{n}{d}\right).
$$
Set $n=p^\nu$ with $\chi(p)=\zeta$ and normalizing by $p^{\nu(k-1/2)}\zeta^\nu$, the above equality specialises to 
\begin{equation}\label{eq2}
\dfrac{a(tp^{2\nu})}{p^{\nu(k-1/2)}\zeta^\nu}=\dfrac{\lambda(p^\nu)}{\zeta^\nu}-\frac{\chi_{0}(p)}{\sqrt{p}}\dfrac{\lambda(p^{\nu-1})}{\zeta^{\nu-1}}.
\end{equation}
Since $f$ is a Hecke eigenform its $p^\nu$-th Fourier coefficient with $\chi(p)=\zeta$ is expressible (see \cite[Lemma 1]{Amri2}) by the following trigonometric identity
\begin{equation}\label{eq:2}
\lambda(p^\nu)=\dfrac{\sin((\nu+1)\theta_p)}{\sin\theta_p}\zeta^\nu,
\end{equation}
for $\theta_p\in (0,\pi)$, and in the limiting cases when $\theta_p=0$ and $\theta_p=\pi$ respectively we have $\lambda(p^{\nu})=(\nu+1)\zeta^\nu$ and $\lambda(p^{\nu})=(-1)^\nu(\nu+1)\zeta^\nu$, which can happen for at most finitely many primes $p$ only (see \cite[Remark 2]{Kohnen} ). Thus we may assume and do that $\theta_p\in (0,\pi)$. Altogether from \eqref{eq2} and \eqref{eq:2} we have
\begin{equation}\label{eq::2}
\dfrac{a(tp^{2\nu})}{\zeta^\nu}>0 \Longleftrightarrow \sin((\nu+1)\theta_p)>\frac{\chi_0(p)}{\sqrt{p}}\sin(\nu\theta_p).
\end{equation}
Let $\epsilon>0$ (small enough). Since for all $p>\frac{1}{\epsilon^2}$ one has $\left|\frac{\chi_0(p)}{\sqrt{p}}\sin(\nu\theta_p)\right|<\epsilon$, then from \eqref{eq::2} we have the inclusion of sets 
\begin{equation}\label{eq3}
\left\{p\leq x : \chi(p)=\zeta, \sin((\nu+1)\theta_p)>\epsilon\right\}\subset\left\{p\leq\frac{1}{\epsilon^2} : \chi(p)=\zeta\right\}\cup\{p\leq x : p\in\mathbb{P}_{>0}(\zeta,\nu)\}.
\end{equation}
On the other hand we have
$$
\sin((\nu+1)\theta_p)>\epsilon\Longleftrightarrow \theta_p\in I_{\epsilon}:=\bigcup_{j=1}^{\frac{\nu+1}{2}}\left(\frac{(2j-2)\pi+\arcsin(\epsilon)}{\nu+1},\frac{(2j-1)\pi-\arcsin(\epsilon)}{\nu+1}\right),
$$
Thus \eqref{eq3} is equivalent to 
$$
\left\{p\leq x : \chi(p)=\zeta, \theta_p\in I_\epsilon\right\}\subset\left\{p\leq\frac{1}{\epsilon^2} : \chi(p)=\zeta\right\}\cup\{p\leq x : p\in\mathbb{P}_{>0}(\zeta,\nu)\}.
$$
Consequently we have  
$$
\pi_{>0}(x,\zeta)+\pi_\zeta\left(\frac{1}{\epsilon^2}\right)\ge \#\left\{p\leq x : \chi(p)=\zeta, \theta_p\in I_\epsilon\right\},
$$
where, $\pi_\zeta(x):=\#\{p\le x : \chi(p)=\zeta\}$. Now divide the above inequality by $\pi_{\zeta}(x)$, we obtain
\begin{equation}\label{eq4}
\dfrac{\pi_{>0}(x,\zeta)}{\pi_\zeta(x)}+\dfrac{\pi_\zeta\left(\frac{1}{\epsilon^2}\right)}{\pi_\zeta(x)} \ge \dfrac{\#\left\{p\leq x : \chi(p)=\zeta, \theta_p\in I_\epsilon\right\}}{\pi_\zeta(x)}.
\end{equation}
Since $\pi_\zeta\left(\frac{1}{\epsilon^2}\right)$ is finite the term $\frac{\pi_\zeta\left(\frac{1}{\epsilon^2}\right)}{\pi_\zeta(x)}$ tends to zero as $x\to\infty$. By Theorem \ref{thmST} we have 
$$\lim\limits_{x\to\infty}\frac{\#\left\{p\leq x : \chi(p)=\zeta, \theta_p\in I_\epsilon\right\}}{\pi_\zeta(x)}=\mu_{ST}(I_\epsilon).$$
Hence a passage to the limit in \eqref{eq4} yields
$$
\liminf_{x\to\infty}\dfrac{\pi_{>0}(\zeta,x)}{\pi_{\zeta}(x)}\ge\mu_{ST}(I_{\epsilon}).
$$
Letting $\epsilon$ goes to zero in the above inequality we find $\liminf\limits_{x\to\infty}\dfrac{\pi_{>0}(\zeta,x)}{\pi_{\zeta}(x)}\ge\mu_{ST}(I)$, where $I:=\bigcup_{j=1}^{\frac{\nu+1}{2}}\left(\frac{(2j-2)\pi}{\nu+1},\frac{(2j-1)\pi}{\nu+1}\right)$. Now from \cite{Meher2017} we have $\mu_{ST}(I)=\frac{1}{2}$, therefore 
$$
\liminf_{x\to\infty}\dfrac{\pi_{>0}(\zeta,x)}{\pi_{\zeta}(x)}\ge\frac{1}{2}.
$$
A similar reasoning yields $\liminf\limits_{x\to\infty}\frac{\pi_{\le 0}(\zeta,x)}{\pi_{\zeta}(x)}\ge\frac{1}{2}$, in combination with $\pi_{\le 0}(x,\zeta)=\pi_\zeta(x)-\pi_{>0}(x,\zeta)$, one sees $\limsup\limits_{x\to\infty}\frac{\pi_{> 0}(\zeta,x)}{\pi_{\zeta}(x)}\le\frac{1}{2}$, whence $\lim\limits_{x\to\infty}\frac{\pi_{> 0}(\zeta,x)}{\pi_{\zeta}(x)}$ exists and equal to $\frac{1}{2}$. Now from Lemma \ref{lem1}, we conclude
$$
\lim_{x\to\infty}\frac{\pi_{> 0}(\zeta,x)}{\pi(x)}=\frac{1}{2r_{\chi}}.
$$
Similar arguments apply to the other cases.
\end{proof}

Now we proceed to prove our main theorem. Fix $\phi\in [0,\pi)$. Let $\zeta$ be a root of unity belonging to $\mathrm{Im}(\chi)$. If $p$ a prime  number satisfying $\chi(p)=\zeta$, from Lemma \ref{lem:1} we know that $\frac{a(tp^{2\nu})}{\zeta^\nu}$ is real, thus we my write
\begin{equation}\label{eq5}
\re(a(tp^{2\nu})e^{-i\phi})=\dfrac{a(tp^{2\nu})}{\zeta^\nu}\re(\zeta^\nu e^{-i\phi})\quad\text{with}\quad\chi(p)=\zeta.
\end{equation}
If $\mathrm{arg}(\zeta^\nu)\equiv\phi\pm\frac{\pi}{2}\pmod{2\pi}$, the sequence $\{\re(a(tp^{2\nu}) e^{-i\phi})\}_{p,\chi(p)=\zeta}$ is trivial. Assume $\mathrm{arg}(\zeta^\nu)\not\equiv\phi\pm\frac{\pi}{2}\pmod{2\pi}$, in this case the sequence $\{\re(a(tp^{2\nu}) e^{-i\phi})\}_{p,\chi(p)=\zeta}$ is not trivial. Without restriction of generality we can assume $\re(\zeta^\nu e^{-i\phi})>0$. Thus for any fixed $\epsilon>0$ (but small) from \eqref{eq::2} we have
\begin{equation}\label{eq6}
 \left\{p\in\mathbb{P} : p> \frac{1}{\epsilon^2},\;\chi(p)=\zeta,\;\theta_p\in I_{\epsilon}\right\}\subset \{p\in\mathbb{P} : \chi(p)=\zeta,\; \re(a(tp^{2\nu})e^{-i\phi})>0\},
\end{equation}
and
\begin{equation}\label{eq7}
\left \{p\in\mathbb{P} : p> \frac{1}{\epsilon^2},\;\chi(p)=\zeta,\;\theta_p\in I'_{\epsilon}\right\}\subset  \{p\in\mathbb{P} : \chi(p)=\zeta,\; \re(a(tp^{2\nu})e^{-i\phi})<0\},
\end{equation}
where 
$$
I_{\epsilon}:=\bigcup_{j=1}^{\frac{\nu+1}{2}}\left(\frac{(2j-2)\pi+\arcsin(\epsilon)}{\nu+1},\frac{(2j-1)\pi-\arcsin(\epsilon)}{\nu+1}\right)\subset [0,\pi],
$$
and
$$
I'_{\epsilon}:=\bigcup_{j=1}^{\frac{\nu+1}{2}}\left(\frac{(2j-1)\pi+\arcsin(\epsilon)}{\nu+1},\frac{2j\pi-\arcsin(\epsilon)}{\nu+1}\right)\subset [0,\pi].
$$
From Theorem \ref{thmST} we know that the sequence $\{\theta_p\}_{p,\chi(p)=\zeta}$ is equidistributed in $[0,\pi]$ with respect to the Sato-Tate measure $\mu_{ST}$, it follows that there are infinitely many primes $p$ satisfying $\chi(p)=\zeta$ such that $\theta_p\in I'_{\epsilon}$ and infinitely many primes $p$ satisfying $\chi(p)=\zeta$ such that $\theta_p\in I_{\epsilon}$, thus the sets in \eqref{eq6} and \eqref{eq7} are infinite. Hence for all root of unity $\zeta$ belonging to $\mathrm{Im}(\chi)$ the sequence $\{a(tp^{2\nu})\}_{p,\chi(p)=\zeta}$ is oscillatory, then the sequence $\{a(tp^{2\nu})\}_{p\in\mathbb{P}}$ so is.

What is left is to calculate the natural densities $\delta(P_{>0}(\phi,\nu))$ and $\delta(P_{<0}(\phi,\nu))$ of the sets $P_{>0}(\phi,\nu)$ and $P_{<0}(\phi,\nu)$. From \eqref{eq5} one observes that 
$$P_{>0}(\phi,\nu)=\coprod_{\mycom{\zeta,\text{root of unity}}{\zeta\in\text{Im}(\chi) ,\re(\zeta^\nu e^{-i\phi})>0}}\mathbb{P}_{>0}(\zeta,\nu)\bigsqcup \coprod_{\mycom{\zeta,\text{root of unity}}{ \zeta\in\text{Im}(\chi) ,\re(\zeta^\nu e^{-i\phi})<0}}\mathbb{P}_{<0}(\zeta,\nu),$$
and
$$P_{<0}(\phi,\nu)=\coprod_{\mycom{\zeta,\text{root of unity}}{\zeta\in\text{Im}(\chi) ,\re(\zeta^\nu e^{-i\phi})<0}}\mathbb{P}_{>0}(\zeta,\nu)\bigsqcup \coprod_{\mycom{\zeta,\text{root of unity}}{ \zeta\in\text{Im}(\chi) ,\re(\zeta^\nu e^{-i\phi})>0}}\mathbb{P}_{<0}(\zeta,\nu),$$
up to finitely many primes. Then the above formula combined with Lemma \ref{lem:1}, gives
\begin{eqnarray*}
\delta\left(P_{>0}(\phi,\nu)\right) &=& \lim_{x\to\infty}\sum_{\mycom{\zeta,\text{root of unity}}{ \zeta\in\text{Im}(\chi) ,\re(\zeta^\nu e^{-i\phi})>0}}\dfrac{\pi_{>0}(x,\zeta)}{\pi(x)}+\lim_{x\to\infty}\sum_{\mycom{\zeta,\text{root of unity}}{\zeta\in\text{Im}(\chi) ,\re(\zeta^\nu e^{-i\phi})<0}}\dfrac{\pi_{<0}(x,\zeta)}{\pi(x)}\\
  &=& \frac{1}{2}\sum_{\mycom{\zeta,\text{root of unity}}{ \zeta\in\text{Im}(\chi) ,\re(\zeta^\nu e^{-i\phi})\neq 0}}\frac{1}{r_{\chi}},\\
  &=&\frac{\delta(P_{\neq 0}(\phi,\nu))}{2}.
\end{eqnarray*}
Likewise we get $\delta\left(P_{<0}(\phi,\nu)\right) =\frac{\delta(P_{\neq 0}(\phi,\nu))}{2}$, which concludes the proof.

\Addresses
\end{document}